\numberwithin{equation}{section}
\theoremstyle{plain}
\newtheorem{theorem}{Theorem}[section]
\newtheorem{corollary}[theorem]{Corollary}
\newtheorem{example}[theorem]{Example}
\newtheorem{conjecture}[theorem]{Conjecture}
\theoremstyle{definition}
\newtheorem{definition}[theorem]{Definition}
\theoremstyle{remark}
\newcommand{\lmt}[2]{\mathop{\lim}_{{#1} \rightarrow {#2}} }
\newcommand{\mm}{\mathfrak m}
\newcommand{\ms}{(X,\d,\mm)}
\newcommand{\cd}{{\rm CD}(0, \infty)}
\newcommand{\cdkn}{{\rm CD}(0, N)}
\newcommand{\mcpkn}{{\rm MCP}(0, N)}
\newcommand{\rcd}{{\rm RCD}(0, \infty)}
\newcommand{\ent}[1]{{\rm Ent}_{#1}}
\newcommand{\R}{\mathbb{R}}
\newcommand{\diam}{\mathop{\rm diam}\nolimits} 
\newcommand{\supp}{\mathop{\rm supp}\nolimits}   
\renewcommand{\d}{{\mathrm d}}
\newcommand{\restr}[1]{\lower3pt\hbox{$|_{#1}$}}
\newcommand{\nchi}{{\raise.3ex\hbox{$\chi$}}}
\title{\large{\bf A sharp isoperimetric inequality in metric measure spaces with non-negative Ricci curvature}
}
\begin{document}
\author{Bang-Xian Han\thanks{Wu Wen-Tsun Key Laboratory of Mathematics and School of   Mathematical Sciences, University of Science and Technology of China (USTC), 230026, Hefei, China.  Email: hanbangxian@ustc.edu.cn} }

\date{\today} 
\maketitle

\begin{abstract}
We prove a sharp dimension-free isoperimetric inequality,  involving the volume entropy,  in non-compact metric measure spaces with non-negative synthetic Ricci curvature.  
\end{abstract}

\textbf{Keywords}: isoperimetric inequality, curvature-dimension condition,  metric measure space, optimal transport, volume entropy.\\
\tableofcontents

\section{Introduction}
The aim  of this  note is to present a sharp isoperimetric inequality for a class of metric measure spaces with  non-negative Ricci curvature in the sense of Lott-Sturm-Villani.

Let $\ms$ be a metric measure space. Recall that the Minkowski content of a  Borel
set $Z \subset X$ with $\mm(Z)<+\infty$ is defined by
\[
\mm^+(Z):=\mathop{\liminf}_{\epsilon \to 0} \frac{\mm(Z^\epsilon)-\mm(Z)}{\epsilon}
\]
where $Z^\epsilon \subset X$ is the $\epsilon$-neighbourhood of $Z$ defined by $Z^\epsilon:=\{x: \d(x, Z)<\epsilon\}$. Isoperimetric inequalities for a class $\mathcal M$ of metric measure spaces  relate the size of the boundary of sets to their measure. Precisely, there is  a function $I_{\mathcal M}(v)$, called isoperimetric profile, such that 
\[
\mm^+(\Omega) \geq I_{\mathcal M}(v)
\]
for all $\ms \in \mathcal M$ and $\Omega \subset X$ with $\mm(\Omega)=v$.

Recently,   isoperimetric inequalities in the class of  non-compact metric measure spaces with synthetic non-negative Ricci curvature  is studied in \cite{balogh2021sharp} and \cite{cavalletti2021isoperimetric}.  In these isoperimetric inequalities,  a key number in the isoperimetric profile is  a  constant called  {\sl asymptotic volume ratio} of a metric measure space $\ms$, defined as 
\begin{equation*}
{\rm AVR}\ms:=\lmt{r}{+\infty} \frac {\mm\big (B_r(x_0)\big)} {r^N}\in [0, +\infty].
\end{equation*}
When ${\rm AVR}\ms>0$, we call that the space $\ms$ has Euclidean volume growth.

Since the dimension upper bound $N$ appear in the asymptotic volume ratio, the  isoperimetric inequalities  obtained in  \cite{balogh2021sharp} and \cite{cavalletti2021isoperimetric} are all dimension-dependent. So it is natural to find a dimension-free version of the isoperimetric inequality for metric measure spaces with  non-negative Ricci curvature. Firstly, recall that a space has non-negative Ricci curvature   (without dimension restriction) in the sense of Lott-Sturm-Villani means:

\begin{definition}[Lott-Sturm-Villani's curvature-dimension condition, cf. \cite{Lott-Villani09, S-O1}]\label{def:cd}
 We say that a metric measure space $\ms$  has non-negative Ricci curvature, or satisfies $\cd$ condition,  if  the entropy  functional  $\ent{\mm}$ is  displacement convex  on  the $L^2$-Wasserstein space $(\mathcal{P}_2(X), W_2)$. This means,  for any two probability measures $\mu_0, \mu_1 \in \mathcal {P}_2 (X)$ with $\mu_0, \mu_1 \ll \mm$, there  is  a $L^2$-Wasserstein geodesic $(\mu_t)_{t\in [0,1]}$ such that 
 \begin{equation}\label{eq1.5-intro}
{\rm Ent}_\mm(\mu_t) \leq t{\rm Ent}_\mm(\mu_1)+(1-t){\rm Ent}_\mm(\mu_0)
\end{equation}
where ${\rm Ent}_\mm(\mu_t)$ is defined as $\int \rho_t \ln \rho_t\,\d \mm$ if $\mu_t=\rho_t\,  \mm$, otherwise  ${\rm Ent}_\mm(\mu_t)=+\infty$.
\end{definition}

In the study of metric (Riemannian) geometry, there are some important spaces where the asymptotic volume ratio ${\rm AVR}=+\infty$. In these cases, we often consider instead the {\sl volume entropy}, which is an important concept in both Riemannian  geometry (cf.  \cite{BessonEntropy})  and dynamical system (cf. \cite{ManningEntropy} ).
\begin{definition}[Volume entropy]\label{def:ve}
A metric measure space $\ms$ admits the {\sl volume entropy} at $x_0\in X$, denoted by $h\ms(x_0)$,  provided
\[
h\ms(x_0):=\lmt{r}{+\infty} \frac {\ln \mm\big (B_r(x_0)\big)} {r}\in [0,\infty].
\]
\end{definition}

It is not hard to prove that a metric measure space with non-negative Ricci curvature in the sense of Definition \ref{def:cd}, surely admits the volume entropy which is independent of the choice of $x_0$

The main result of this note is the following sharp  isoperimetric inequality involving volume entropy:

\begin{theorem}[Sharp isoperimetric inequality]\label{th1}
Let $\ms$ be a   metric measure space with non-negative synthetic Ricci curvature.  Then for any $\Omega \subset X$ with finite measure, it holds the following isoperimetric inequality
\begin{equation}\label{intro:eq1}
\mm^+(\Omega) \geq \mm(\Omega) h\ms.
\end{equation}
Moreover, the constant $h\ms$ in \eqref{intro:eq1} can not be replaced by any larger ones.
\end{theorem}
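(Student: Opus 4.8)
The plan is to deduce \eqref{intro:eq1} from the displacement convexity of the entropy written in its multiplicative (Brunn--Minkowski) form, and to make the volume entropy appear by transporting $\Omega$ onto large balls. First I would record the \emph{multiplicative Brunn--Minkowski inequality} that $\cd$ supplies for free: given Borel sets $A_0,A_1$ of finite positive measure, set $\mu_i:=\mm(A_i)^{-1}\mm|_{A_i}$, so that $\ent{\mm}(\mu_i)=-\ln\mm(A_i)$, and let $(\mu_t)$ be a geodesic as in Definition \ref{def:cd}. Since $\mu_t=\rho_t\mm$, Jensen's inequality gives $\ent{\mm}(\mu_t)\ge-\ln\mm(S_t)$ for any Borel $S_t\supseteq\supp\mu_t$, while \eqref{eq1.5-intro} bounds $\ent{\mm}(\mu_t)$ from above; exponentiating yields $\mm(S_t)\ge\mm(A_0)^{1-t}\mm(A_1)^{t}$. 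The point is that with $K=0$ no Wasserstein-distance term survives, so the estimate is clean and dimension-free.

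Next I would run the core estimate for bounded $\Omega$. Fix $x_0$, put $D:=\sup_{x\in\Omega}\d(x,x_0)$, and transport $\mu_0=\mm(\Omega)^{-1}\mm|_\Omega$ onto $\mu_1=\mm(B_R(x_0))^{-1}\mm|_{B_R(x_0)}$. Each geodesic in the supporting plan joins a point of $\overline\Omega$ to a point of $\overline{B_R(x_0)}$, so its time-$t$ endpoint lies within $t\,\d(\gamma_0,\gamma_1)\le t(D+R)$ of $\Omega$; choosing $t=\epsilon/(D+R)$ confines $\supp\mu_t$ in the $\epsilon$-neighbourhood, and the Brunn--Minkowski step gives
\begin{equation*}
\mm(\Omega^\epsilon)\ \ge\ \mm(\Omega)^{1-t}\,\mm\big(B_R(x_0)\big)^{t},\qquad t=\frac{\epsilon}{D+R}.
\end{equation*}
Taking logarithms and letting $R\to\infty$ (with $\epsilon,D$ fixed), Definition \ref{def:ve} forces $t\ln\mm(B_R)=\epsilon\cdot\frac{\ln\mm(B_R)}{R}\cdot\frac{R}{D+R}\to h\ms\,\epsilon$, whence $\mm(\Omega^\epsilon)\ge\mm(\Omega)\,e^{h\ms\,\epsilon}$; dividing by $\epsilon$ and letting $\epsilon\to0$ produces \eqref{intro:eq1}. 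For a general $\Omega$ of finite (possibly infinite-diameter) measure I would apply this to the bounded pieces $\Omega\cap B_D(x_0)$, use $(\Omega\cap B_D)^\epsilon\subseteq\Omega^\epsilon$, and send $D\to\infty$ and $R\to\infty$ jointly with $R/D\to\infty$, so that $\mm(\Omega\cap B_D)\to\mm(\Omega)$ while the exponent still delivers $e^{h\ms\,\epsilon}$.

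For sharpness I would test \eqref{intro:eq1} on the balls themselves. Writing $f(r):=\mm(B_r(x_0))$, the inclusion $B_r^\epsilon\subseteq B_{r+\epsilon}$ gives $\mm^+(B_r)\le\liminf_{\epsilon\to0}\frac{f(r+\epsilon)-f(r)}{\epsilon}$, and a standard monotone-function argument shows that if this lower right derivative exceeded $(h\ms+\delta)f(r)$ for all large $r$, then $\frac{\ln f(r)}{r}$ would exceed $h\ms+\delta$, contradicting Definition \ref{def:ve}. Hence $\liminf_{r\to\infty}\mm^+(B_r)/\mm(B_r)\le h\ms$, so no constant larger than $h\ms$ can replace it.

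The conceptual heart, and the step I expect to be most delicate, is the limit in the second paragraph: for each fixed $R$ the estimate only produces the slope $\frac{\ln\mm(B_R)-\ln\mm(\Omega)}{D+R}$, and it is the interplay between the enlargement $\epsilon$, the interpolation time $t$, and the radius $R$ (together with the truncation $D$ in the unbounded case) that must be orchestrated so that $t\to0$ yet $t\ln\mm(B_R)\to h\ms\,\epsilon$, making exactly $h\ms$---and not a smaller multiple---emerge. The measure-theoretic inputs (a geodesic with $\mu_t\ll\mm$ for absolutely continuous endpoints, confinement of $\supp\mu_t$, and the Jensen lower bound) are routine in the $\cd$ setting; the care lies entirely in the order of limits and in the monotone-function differentiation underlying the sharpness claim.
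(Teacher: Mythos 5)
Your proof of inequality \eqref{intro:eq1} is essentially the paper's own argument: the same ${\rm CD}(0,\infty)$-plus-Jensen ``multiplicative Brunn--Minkowski'' bound $\mm(S_t)\ge\mm(A_0)^{1-t}\mm(A_1)^t$, the same confinement of $\supp\mu_t$ in $\Omega^\epsilon$ via the choice $t=\epsilon/(D+R)$, and the same emergence of $h\ms$ by letting the ball radius go to infinity. The differences there are minor but real: you take the limits in the opposite order (first $R\to\infty$ at fixed $\epsilon$, yielding the clean intermediate statement $\mm(\Omega^\epsilon)\ge\mm(\Omega)e^{h\ms\,\epsilon}$, then $\epsilon\to0$), which incidentally avoids the paper's step requiring $\mm(\Omega^\epsilon)\to\mm(\Omega)$; and your truncation $\Omega\cap B_D(x_0)$ with $R/D\to\infty$ covers sets of finite measure but infinite diameter, a case the paper's proof (written in terms of $\diam(\Omega)$) silently assumes away. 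Where you genuinely diverge is the sharpness half. The paper also argues by contradiction, but after extracting the bound $\frac{\ln\mm(B_{r+\delta})-\ln\mm(B_r)}{\delta}\ge C$ it invokes the curvature condition a \emph{second} time, applying the interpolation inequality to the pair $\big(B_{r+\delta}(x_0),B_\epsilon(x_0)\big)$, whose $t$-intermediate set lies in $B_r(x_0)$, and reaches $C\le\frac{\delta+\epsilon}{\delta}h\ms$ after letting $r\to\infty$ and then $\epsilon\to0$. You instead integrate the same Dini-derivative bound along balls: monotonicity of $f(r)=\mm(B_r(x_0))$ together with lower right derivative at least $Cf$ gives $\ln f(r)\ge Cr+O(1)$, hence $h\ms\ge C$, an immediate contradiction with Definition \ref{def:ve}. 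Your route is shorter and more elementary, using no curvature input at all in the sharpness step; the paper's route produces, as a by-product, an explicit concavity-type inequality for ball volumes, but as a proof of sharpness yours is cleaner. Note that both arguments lean on the same technical point you name explicitly and the paper hides in ``from the proof of Theorem \ref{th1} we can see'': passing from a pointwise lower bound on the right Dini derivative of the monotone function $r\mapsto\ln\mm(B_r(x_0))$ to a finite-increment (or growth) bound, via a.e.\ differentiability of monotone functions.
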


\section{Main results}\label{sect:main}
In the following two theorems we will prove a sharp isoperimetric inequality for the class of metric measure spaces with  non-negative  synthetic Ricci curvature.
\begin{theorem}[Dimension-free isoperimetric inequality]\label{th1}
Let $\ms$ be a   metric measure space with non-negative Ricci curvature.  Then for any $\Omega \subset X$ with finite measure, it holds the following isoperimetric inequality
\begin{equation}\label{th1:eq1}
\mm^+(\Omega) \geq \mm(\Omega) h\ms.
\end{equation}
\end{theorem}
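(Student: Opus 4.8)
The plan is to deduce the inequality from the \emph{multiplicative (entropic) Brunn--Minkowski inequality} encoded by the \cd\ condition, by transporting $\Omega$ towards an exhausting family of large balls. The guiding heuristic is the one--dimensional model $(\R,|\cdot|,e^{cx}\,\dx)$, which is \cd\ with $h\ms=c$ and for which the half--line $(-\infty,a]$ satisfies $\mm^+=c\,\mm$ with equality; the argument below is the synthetic incarnation of the fact that, in a space of exponential volume growth, pushing a set outward increases its measure at logarithmic rate at least $h\ms$.

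First I would record the entropic Brunn--Minkowski inequality: for Borel sets $A_0,A_1$ with $0<\mm(A_i)<\infty$ and every $t\in[0,1]$,
\[
\ln\mm(M_t)\ \geq\ (1-t)\ln\mm(A_0)+t\ln\mm(A_1),
\]
where $M_t$ denotes the set of $t$--intermediate points of geodesics running from $A_0$ to $A_1$. To prove it, apply Definition~\ref{def:cd} to the normalised uniform measures $\mu_i:=\mm(A_i)^{-1}\mm\restr{A_i}$, whose entropy is exactly $\ent{\mm}(\mu_i)=-\ln\mm(A_i)$. Displacement convexity yields a $W_2$--geodesic $(\mu_t)$ with $\ent{\mm}(\mu_t)\leq-(1-t)\ln\mm(A_0)-t\ln\mm(A_1)$; being the time--$t$ marginal of an optimal dynamical plan, $\mu_t$ satisfies $\supp\mu_t\subseteq M_t$, while Jensen's inequality gives the universal lower bound $\ent{\mm}(\mu_t)\geq-\ln\mm(\supp\mu_t)\geq-\ln\mm(M_t)$. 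Combining the two estimates yields the claim.

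Next, assume first that $\Omega$ is bounded, say $\Omega\subseteq B_{R_0}(x_0)$, and apply the above with $A_0=\Omega$ and $A_1=B_R(x_0)$ for $R$ large. The crucial geometric observation is that $M_t\subseteq\Omega^{\,t(R_0+R)}$: indeed any intermediate point $\gamma_t$ with $\gamma_0\in\Omega$, $\gamma_1\in B_R(x_0)$ satisfies $\d(\gamma_t,\Omega)\le\d(\gamma_t,\gamma_0)=t\,\d(\gamma_0,\gamma_1)\le t(R_0+R)$. Writing $\epsilon:=t(R_0+R)$ and inserting this containment into the entropic Brunn--Minkowski inequality, after rearranging one obtains
\[
\frac{\ln\mm(\Omega^\epsilon)-\ln\mm(\Omega)}{\epsilon}\ \geq\ \frac{\ln\mm\big(B_R(x_0)\big)-\ln\mm(\Omega)}{R_0+R}.
\]
Taking $\liminf_{\epsilon\to0}$ on the left, which equals $\mm^+(\Omega)/\mm(\Omega)$ since $\mm(\Omega)\in(0,\infty)$, and then letting $R\to+\infty$ on the right, the constant $R_0$ and the term $\ln\mm(\Omega)$ become negligible and the right--hand side converges to $h\ms$ by Definition~\ref{def:ve}. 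This proves $\mm^+(\Omega)\ge\mm(\Omega)\,h\ms$ for bounded $\Omega$, the cases $\mm(\Omega)=0$ and $h\ms=+\infty$ being automatic.

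The main obstacle is the passage to unbounded sets of finite measure, since for unbounded $\Omega$ the containment $M_t\subseteq\Omega^{\,t(R_0+R)}$ fails: intermediate points arising from very long geodesics escape every fixed neighbourhood of $\Omega$. I would handle this by truncation, setting $\Omega_k:=\Omega\cap B_k(x_0)$, applying the bounded case to obtain $\mm^+(\Omega_k)\ge\mm(\Omega_k)\,h\ms$, and letting $k\to\infty$. As $\mm(\Omega_k)\to\mm(\Omega)$, it then remains to control the spurious boundary created on $\partial B_k(x_0)$; because $\mm(\Omega)<\infty$, a coarea/Fubini argument produces radii $k_j\to+\infty$ along which the flux of $\Omega$ through $\partial B_{k_j}(x_0)$ is negligible, so that $\liminf_j\mm^+(\Omega_{k_j})\le\mm^+(\Omega)$. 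This lower--semicontinuity--type step, rather than the transport estimate, is where the genuine care is required, and it is most cleanly carried out by replacing $\mm^+$ with the relaxed perimeter functional and invoking its coarea formula. Finally, the same model $(\R,|\cdot|,e^{cx}\,\dx)$ shows the constant $h\ms$ cannot be improved, yielding the sharpness claimed in the introduction.
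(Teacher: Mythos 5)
Your proof of the core inequality is essentially the paper's own: the entropic Brunn--Minkowski inequality obtained from Definition \ref{def:cd} together with Jensen's inequality, the containment of the set of $t$-intermediate points in $\Omega^{t(R_0+R)}$, the conversion of the resulting multiplicative estimate into a bound on the logarithmic incremental ratio of $\epsilon\mapsto\mm(\Omega^\epsilon)$, and the limit $R\to+\infty$; using the radius $R_0$ of an enclosing ball instead of $\diam(\Omega)$ is an inessential variant. The genuine difference is your treatment of unbounded sets of finite measure. The paper's argument uses $\diam(\Omega)$ and therefore, as written, only covers bounded $\Omega$, even though the theorem is stated for every finite-measure set; you correctly point out that the containment $M_t\subset\Omega^{t(R_0+R)}$ breaks down when $\Omega$ is unbounded, and your truncation scheme is the right fix. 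Moreover, it can be completed elementarily, with Minkowski content itself and no perimeter theory: put $\Omega_k:=\Omega\cap B_k(x_0)$ and $f(r):=\mm\big(\Omega\cap B_r(x_0)\big)$; from $(\Omega_k)^\epsilon\subset\Omega^\epsilon\cap B_{k+\epsilon}(x_0)$ one gets
\[
\mm^+(\Omega_k)\ \le\ \mm^+(\Omega)\ +\ \limsup_{\epsilon\to 0}\frac{f(k+\epsilon)-f(k)}{\epsilon},
\]
and since $f$ is nondecreasing with $\sup f\le\mm(\Omega)<+\infty$, it is differentiable almost everywhere with $\int_0^\infty f'(r)\,\d r\le\mm(\Omega)$, so there exist radii $k_j\to+\infty$ at which $f'(k_j)\to 0$; applying the bounded case to each $\Omega_{k_j}$ and letting $j\to\infty$ yields the claim. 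I would, however, not route this step through the identification of perimeter with relaxed Minkowski content, as you suggest at the end: that identification is established under extra structural hypotheses (doubling and Poincar\'e-type assumptions), which a general \cd\ space in this theorem need not satisfy, whereas the monotone-function argument above uses nothing beyond $\mm(\Omega)<+\infty$. Finally, note that your bounded-case argument, like the paper's, silently assumes $\mm(\Omega)>0$ and $\mm(B_R(x_0))<+\infty$; both are harmless, since the statement is trivial when $\mm(\Omega)=0$ and finiteness of the measure of balls is implicit in Definition \ref{def:ve}.
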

\begin{proof}
Given $x_0 \in X$ and  $R>0$. 
Define $\mu_0=\frac 1{\mm(\Omega)} \mm \restr{\Omega}$ and $\mu_1=\frac 1{\mm(B_R(x_0))} \mm \restr{B_R(x_0)}$.  By \ref{def:cd} there is an  $L^2$-Wasserstein geodesic  $(\mu_t)$ connecting $\mu_0, \mu_1$ such that
 \begin{equation}\label{th1:eq2}
{\rm Ent}_\mm(\mu_t) \leq t{\rm Ent}_\mm(\mu_1)+(1-t){\rm Ent}_\mm(\mu_0)
\end{equation}
Denote the support of $\mu_t$ by $\supp \mu_t$ and the set of $t$-intermediate points $$Z_t:=\Big \{z: \exists~ x\in \Omega, y\in B_R(x_0), \text{such that}~ \frac {\d(z, x)}t=\frac{\d(z, y)}{1-t}=\d(x, y) \Big\}.$$
It can be seen that $ \supp \mu_t  \subset Z_t$. Then by \eqref{th1:eq2} and Jensen's inequality, we have
 \begin{equation}\label{th1:eq3}
-\ln \big (\mm(Z_t)\big) \leq  -\ln \big (\mm(\supp \mu_t)\big)  \leq -t \ln \big (\mm(B_R(x_0))\big ) - (1-t) \ln \big(\mm(\Omega) \big).
\end{equation}

Let $\epsilon:=t(\diam (\Omega)+R)$.  It is not hard to see that $Z_t\subset \Omega^\epsilon$.   If $\mm^+(\Omega)=+\infty$, there is nothing to prove. Otherwise,  we have
$\lmt{\epsilon}0\mm(\Omega^\epsilon)=\mm(\Omega)$.  Then we have
\begin{eqnarray*}
\frac{\mm^+(\Omega) }{\mm(\Omega)}&=& \mathop{\liminf}_{\epsilon \to 0}  \frac 1{\mm(\Omega) } \frac{\mm(\Omega^\epsilon)-\mm(\Omega)}{ \epsilon}\\
&=&   \mathop{\liminf}_{\epsilon \to 0} \frac{\ln \big(\mm(\Omega^\epsilon)\big)- \ln \big(\mm(\Omega)\big)}  {\mm(\Omega^\epsilon)-\mm(\Omega)}  \frac {\mm(\Omega^\epsilon)-\mm(\Omega)}{ \epsilon}\\
&\geq &    \mathop{\liminf}_{t \to 0} \frac{\ln\big(\mm(Z_t)\big)-\ln \big(\mm(\Omega) \big)}{t(\diam (\Omega)+R)}\\
\text {By}~ \eqref{th1:eq3}&\geq&  \mathop{\liminf}_{t \to 0} \frac{t \ln \big (\mm(B_R(x_0))\big ) +(1-t) \ln \big(\mm(\Omega) \big)-\ln\big(\mm(\Omega)\big)}{t(\diam (\Omega)+R)}\\
&=&   \frac{ \ln \big (\mm(B_R(x_0))\big )-\ln\big(  \mm(\Omega)\big)}{\diam (\Omega)+R}.
\end{eqnarray*}
Letting $R \to \infty$,  by  Definition \ref{def:ve}, we get
\[
\frac{\mm^+(\Omega) }{\mm(\Omega)} \geq h\ms
\]
which is the thesis.
\end{proof}
\bigskip
\begin{theorem}[Sharp inequality]\label{th2}
The inequality \eqref{th1:eq1} in Theorem \ref{th1} is sharp. This means, for any $C> h\ms$, the inequality ${\mm^+(\Omega) } \geq C {\mm(\Omega)}$ does not hold for any Borel set  $\Omega \subset X$.
\end{theorem}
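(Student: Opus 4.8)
The plan is to show that the metric balls themselves are asymptotically extremal, so that the ratio $\mm^+(\Omega)/\mm(\Omega)$ can be pushed arbitrarily close to $h\ms$ from above. It suffices to produce, for each $C>h\ms$, a single Borel set $\Omega$ with $\mm^+(\Omega)<C\,\mm(\Omega)$: together with Theorem \ref{th1} this shows that $h\ms$ is exactly the optimal constant in \eqref{th1:eq1}. Fix a base point $x_0\in\supp\mm$ and set $V(r):=\mm\big(B_r(x_0)\big)$, which in a $\cd$ space is finite, positive for every $r>0$, and non-decreasing in $r$. The elementary first step is that the $\epsilon$-neighbourhood of a ball sits inside a slightly larger ball: if $\d(x,B_r(x_0))<\epsilon$ then $\d(x,x_0)<r+\epsilon$ by the triangle inequality, so $\big(B_r(x_0)\big)^\epsilon\subseteq B_{r+\epsilon}(x_0)$ and hence $\mm\big((B_r)^\epsilon\big)\le V(r+\epsilon)$. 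Passing to the $\liminf$ in the definition of Minkowski content gives
\[
\mm^+\big(B_r(x_0)\big)\ \le\ \liminf_{\epsilon\to0}\frac{V(r+\epsilon)-V(r)}{\epsilon}\ =:\ \underline{D}^+V(r),
\]
the lower right Dini derivative of $V$ at $r$.

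With this at hand the claim reduces to a one-variable statement about $V$: for every $C>h\ms$ there is an arbitrarily large radius $r$ with $\underline{D}^+V(r)<C\,V(r)$, for then $\Omega=B_r(x_0)$ satisfies $\mm^+(\Omega)\le\underline{D}^+V(r)<C\,V(r)=C\,\mm(\Omega)$. I would prove this by contraposition. Suppose that for some $C>h\ms$ one had $\underline{D}^+V(r)\ge C\,V(r)$ for all $r\ge r_0$. Since $V$ is monotone it is differentiable almost everywhere with $V(r)-V(r_0)\ge\int_{r_0}^rV'(s)\,\d s$, and at every point of differentiability $V'(s)=\underline{D}^+V(s)\ge C\,V(s)$; combining these gives the integral inequality $V(r)\ge V(r_0)+C\int_{r_0}^rV(s)\,\d s$. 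Grönwall's inequality then yields $V(r)\ge V(r_0)\,e^{C(r-r_0)}$, whence
\[
h\ms=\lim_{r\to\infty}\frac{\ln V(r)}{r}\ \ge\ C,
\]
contradicting $C>h\ms$. Thus radii with $\underline{D}^+V(r)<C\,V(r)$ occur for arbitrarily large $r$, completing the proof.

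The only delicate point is the passage from a pointwise lower bound on the Dini derivative to exponential growth of $V$, because $V=\mm(B_\cdot(x_0))$ is merely monotone — possibly with jumps and non-differentiable on a null set — while the Minkowski content is defined through a one-sided $\liminf$. I expect this to be the main obstacle to make fully rigorous, and I would address it by the a.e.-derivative/Grönwall argument above, which never requires continuity or differentiability of $V$; an equivalent route is to note that $g(r):=\ln V(r)-Cr$ has non-negative lower right Dini derivative and only upward jumps, hence is non-decreasing by the classical increasing-function criterion, giving the same linear lower bound on $\ln V(r)$. Beyond this technical lemma the argument is routine, the conceptual content being the identification of balls as the extremal sets and the contrapositive comparison of the exponential growth rate $h\ms$ of $\ln V$ with the pointwise logarithmic derivative of $V$.
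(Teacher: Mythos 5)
Your proof is correct (modulo the standing background facts, used implicitly by the paper as well, that balls centered at a point of $\supp \mm$ have finite positive measure and that a $\cd$ space admits the volume entropy), but it takes a genuinely different route from the paper's. The paper argues by contradiction and invokes the curvature hypothesis a \emph{second} time: it applies the entropy interpolation \eqref{th1:eq3} to the pair of balls $B_\epsilon(x_0)$, $B_{r+\delta}(x_0)$, obtaining the approximate concavity \eqref{prop1:eq1} of $r\mapsto \ln\mm(B_r(x_0))$, which bounds the fixed-step quotient $\frac1\delta\big(\ln\mm(B_{r+\delta}(x_0))-\ln\mm(B_r(x_0))\big)$ from above by a quantity tending to $\frac{\delta+\epsilon}{\delta}\,h\ms$ as $r\to\infty$; since the assumed inequality with constant $C$ forces this same quotient to be at least $C$, letting $\epsilon\to 0$ gives $C\le h\ms$, a contradiction. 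You never re-use the $\cd$ condition: your argument is pure real analysis of the monotone function $V(r)=\mm(B_r(x_0))$ --- the containment $\big(B_r(x_0)\big)^\epsilon\subseteq B_{r+\epsilon}(x_0)$, Lebesgue differentiation of monotone functions, and a reverse Gr\"onwall inequality showing that $\underline{D}^+V\ge C\,V$ on $[r_0,\infty)$ forces $V(r)\ge V(r_0)e^{C(r-r_0)}$ and hence $h\ms\ge C$. This buys two things. First, elementarity and generality: your sharpness proof is valid in \emph{any} metric measure space with finite, positive ball volumes that admits a volume entropy; curvature is needed only for Theorem \ref{th1} itself and for the existence of $h\ms$. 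Second, it makes rigorous exactly the step the paper treats tersely: the paper's claim that ``from the proof of Theorem \ref{th1}'' one sees $\frac1\delta\big(\ln\mm(B_{r+\delta}(x_0))-\ln\mm(B_{r}(x_0))\big)\ge C$ for a \emph{fixed} $\delta>0$ is not immediate --- the hypothesis applied to balls yields only the infinitesimal (Dini) bound $\underline{D}^+V\ge C\,V$, and upgrading it to a finite increment of $\ln V$ requires precisely the monotone-function/Gr\"onwall argument (or the increasing-function criterion) that you identify as the delicate point and carry out. What the paper's route buys in exchange is economy: it recycles the optimal-transport interpolation already established for Theorem \ref{th1} rather than introducing a separate real-variable lemma.
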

\begin{proof}
We will prove the theorem by contradiction. Assume there is a constant $C> h\ms$, such that 
 \begin{equation}\label{th2:eq1}
{\mm^+(\Omega) } \geq C {\mm(\Omega)}
\end{equation}
  for any Borel set $\Omega \subset X$.

Apply \eqref{th1:eq3} with  $\Omega=B_{r+\delta}(x_0)$  and $R=\epsilon$.   We get the following interpolation inequality 
 \begin{equation}\label{prop1:eq1}
\ln \big (\mm(Z_t)\big) \geq  t \ln \big (\mm(B_\epsilon (x_0))\big ) +(1-t) \ln \big(\mm(B_{r+\delta}(x_0)) \big)~~\forall t\in [0, 1].
\end{equation}
For any $z\in Z_t$, we have $\d(z, x_0) \leq (1-t)(r+\delta)+\epsilon$.
Choosing  $t=\frac {\delta+\epsilon}{r+\delta} $,   we have $Z_t \subset B_r(x_0)$. Thus  \eqref{prop1:eq1} implies
\[
\ln \big (\mm(B_r (x_0))\big )  \geq  \frac {\delta+\epsilon}{r+\delta}  \ln \big (\mm(B_\epsilon (x_0))\big ) +\frac {r-\epsilon}{r+\delta} \ln \big(\mm(B_{r+\delta}(x_0)) \big).
\]
Then
\begin{eqnarray*}
&&\frac {r-\epsilon}{r+\delta}\left ( \frac{\ln \big(\mm(B_{r+\delta}(x_0)) \big)-\ln \big(\mm(B_{r}(x_0)) \big)}{\delta} \right )\\
&\leq &  \frac {\delta+\epsilon}{\delta(r+\delta)} \Big( \ln \big (\mm(B_r (x_0))\big ) -  \ln \big (\mm(B_\epsilon (x_0))\big ) \Big).
\end{eqnarray*}
From the proof of Theorem \ref{th1},  we can see that $ \frac{\ln \big(\mm(B_{r+\delta}(x_0)) \big)-\ln \big(\mm(B_{r}(x_0)) \big)}{\delta}  \geq C$,  so
\[
\frac {r-\epsilon}{r+\delta} C\leq   \frac {\delta+\epsilon}{\delta(r+\delta)} \Big( \ln \big (\mm(B_r (x_0))\big ) -  \ln \big (\mm(B_\epsilon (x_0))\big ) \Big).
\]
Letting $r \to \infty$, we get
\[
C \leq   \frac {\delta+\epsilon}{\delta} h\ms.
\]
Letting $\epsilon \to 0$ we get the contradiction.
\end{proof}
\bigskip

For metric measure spaces satisfying a generalized Bishop-Gromov inequality, such as $\cdkn$ spaces or $\mcpkn$ spaces with $N<+\infty$, whose volume entropy vanish,   it is known that  there is no isometric inequality in the form of \eqref{th1:eq1} with a positive $C$.
More generally, as a direct consequence of Theorem \ref{th1} and Theorem \ref{th2}, we have the following result. 
\begin{corollary}
Let $\ms$ be a metric measure space with non-negative Ricci curvature  $h\ms=0$, then there is no isoperimetric inequality in the form of 
\[
{\mm^+(\Omega) } \geq C {\mm(\Omega)}~~~\forall \Omega \subset X
\]
from some $C>0$.
\end{corollary}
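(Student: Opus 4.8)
The plan is to deduce this statement as an immediate specialization of Theorem \ref{th2}, since the hypothesis $h\ms=0$ is precisely what makes the sharpness threshold in that theorem collapse to zero. The key observation is that Theorem \ref{th2} rules out the universal inequality $\mm^+(\Omega)\geq C\,\mm(\Omega)$ for \emph{every} constant $C$ strictly larger than the volume entropy $h\ms$; when $h\ms=0$, the condition $C>h\ms$ reduces to $C>0$, which covers exactly the range of constants at issue in the corollary.

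Concretely, I would argue by contradiction in parallel with the proof of Theorem \ref{th2}. Suppose there were some $C>0$ for which $\mm^+(\Omega)\geq C\,\mm(\Omega)$ holds for all Borel sets $\Omega\subset X$. First I would record the trivial but essential inequality $C>0=h\ms$, so that $C$ falls into the range to which Theorem \ref{th2} applies. Then I would invoke Theorem \ref{th2} directly: it asserts that no such universal inequality can hold for any $C>h\ms$, contradicting the assumed inequality. This immediately yields the claim that no positive $C$ can work.

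I expect the only point requiring care is the correct reading of the conclusion of Theorem \ref{th2}: its proof establishes, via the interpolation estimate \eqref{prop1:eq1} applied to balls and the limit $r\to\infty$ followed by $\epsilon\to 0$, that the assumption of a universally valid inequality with $C>h\ms$ forces $C\leq h\ms$, a contradiction. Thus the substantive content is entirely carried by Theorem \ref{th2}, and the corollary is obtained by plugging in $h\ms=0$. There is no genuine obstacle here beyond ensuring that the quantifier ``for all $\Omega$'' is matched between the corollary and Theorem \ref{th2}; once that alignment is made explicit, the deduction is a one-line consequence, and I would present it as such rather than reproducing the limiting computation already carried out in the proof of Theorem \ref{th2}.
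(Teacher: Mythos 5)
Your proposal is correct and matches the paper exactly: the paper states this corollary without a separate proof, noting it is ``a direct consequence of Theorem \ref{th1} and Theorem \ref{th2},'' and your deduction---substituting $h\ms=0$ into the sharpness statement of Theorem \ref{th2} so that $C>h\ms$ becomes $C>0$---is precisely that intended one-line specialization. Your attention to the quantifier ``for all $\Omega$'' is also the right reading, since the proof of Theorem \ref{th2} assumes the inequality holds universally and derives a contradiction.
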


\bigskip

\begin{example}
Consider the 1-dimensional metric measure space $(\R, | \cdot |, e^{t} \mathcal L^1) $. It can be seen that this is a $\rcd$ space and its volume entropy is 1. Take $\Omega=(-\infty, 0]$, we have $(e^{t} \mathcal L^1)(\Omega)=1$ and $(e^{t} \mathcal L^1)^+(\Omega)=1$, so that the equality holds in \eqref{th1:eq1}.
\end{example}

Inspired by the example above, we propose  the following conjecture, which will be studied in a forthcoming paper.

\begin{conjecture}[Rigidity]\label{th:rigid}
Let $\ms$ be a  $\rcd$ metric measure space with $h\ms>0$. Assume  there is $\Omega \subset X$ with $\mm(\Omega)<+\infty$ such that 
\[
{\mm^+(\Omega) } =h\ms  {\mm(\Omega)}>0.
\]
Then 
 $$\ms \cong \Big (\R, | \cdot |, e^{t} \d t \Big) \times (Y, \d_Y, \mm_Y)$$ for some  $\rcd$ space $(Y, \d_Y, \mm_Y) $ with $\mm_Y(Y)<+\infty$,
and   up to change of variables
\[
\Omega={(-\infty, e] \times Y} \subset  \R \times Y
\]
where $e\in \R$ satisfies  $\int_{-\infty}^e e^s \,\d s=\mm(\Omega)$.
\end{conjecture}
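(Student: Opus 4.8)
We sketch a possible approach to Conjecture \ref{th:rigid}, in the spirit of the rigidity results that accompany splitting theorems. The guiding idea is that equality $\mm^+(\Omega)=h\ms\,\mm(\Omega)$ must saturate every inequality used in the proof of Theorem \ref{th1}, and that the resulting rigidity can be encoded in a single function $b\colon X\to\R$ playing the role of the $\R$-coordinate of the model. In the model $(\R,|\cdot|,e^t\,\d t)\times(Y,\d_Y,\mm_Y)$ the function $b(s,y)=s$ satisfies $|\nabla b|\equiv 1$ and $\Delta b\equiv 1$ (the drift Laplacian of $s$ against $e^{s}\,\d s$), while $\Omega$ is the sublevel set $\{b\le e\}$. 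The whole plan is to reconstruct such a $b$ abstractly and then feed it into the $\rcd$ splitting machinery.

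First I would trace the equality backwards through the proof of Theorem \ref{th1}. Equality forces, for the relevant geodesics: that $\ent{\mm}$ is \emph{affine} along the $\ws$-geodesic; that Jensen's inequality \eqref{th1:eq3} is tight, so each interpolant is the normalized restriction $\mu_t=\frac{1}{\mm(Z_t)}\mm\restr{Z_t}$ with $\supp\mu_t=Z_t$; and that the inclusion $Z_t\subset\Omega^\epsilon$ loses nothing asymptotically. Running this with $\mu_1=\frac{1}{\mm(B_R(x_0))}\mm\restr{B_R(x_0)}$ and letting $R\to\infty$, the associated Kantorovich potentials, suitably renormalized, should converge to a Busemann-type function $b$ adapted to the direction of exponential volume growth. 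The affinity of the entropy, together with the $1$-dimensional localization of the transport, should then upgrade to the pointwise identities $|\nabla b|=1$ $\mm$-a.e.\ and $\Delta b=h\ms$ in the sense of the measure-valued Laplacian.

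With $b$ at hand I would invoke the Bochner/$\Gamma_2$ calculus available on $\rcd$ spaces, following Gigli's proof of the splitting theorem. Since $|\nabla b|^2\equiv1$ and $\Delta b\equiv h\ms$ is constant, the Bochner inequality is saturated, forcing $\Hess b=0$ and $\Ric(\nabla b,\nabla b)=0$; the vanishing Hessian makes $\nabla b$ a parallel gradient field and produces an isometric splitting $\ms\cong(\R,|\cdot|,\nu)\times(Y,\d_Y,\mm_Y)$ with $b$ the projection onto $\R$. The drift equation $\partial_s\log\frac{\d\nu}{\d s}=h\ms$ coming from $\Delta b=h\ms$ then identifies $\nu=e^{h\ms\,s}\,\d s$, which after rescaling the metric so that $h\ms=1$ is exactly $e^t\,\d t$. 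Finally, since each $\mu_t$ is uniform on $Z_t$ and $\Omega$ is, up to this change of variables, a sublevel set of $b$, one reads off $\Omega=(-\infty,e]\times Y$ with $e$ fixed by $\int_{-\infty}^e e^s\,\d s=\mm(\Omega)$; finiteness of $\mm(\Omega)$ forces $\mm_Y(Y)<+\infty$.

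The hard part will be the second step: promoting the equality, which a priori only constrains the \emph{asymptotic} $R\to\infty$ behaviour, into honest pointwise rigidity of an $\mm$-a.e.\ defined function, i.e.\ showing that the renormalized Kantorovich potentials converge and that the limit $b$ genuinely satisfies $|\nabla b|=1$ and $\Delta b=h\ms$. A secondary obstacle is that the classical $\rcd$ splitting theorem is phrased for lines (the case $\Delta b=0$); here the nonzero drift $h\ms$ requires a weighted variant, establishing that $e^{h\ms s}$ is the unique density on the $\R$-factor compatible with $|\nabla b|=1$, $\Delta b=h\ms$ and $\Ric\ge0$. I expect both difficulties to be tractable precisely because the $\rcd$ (rather than merely $\cd$) assumption makes the full second-order calculus and the splitting machinery available.
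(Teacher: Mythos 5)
You should first note that the paper does not prove this statement at all: it is stated as a conjecture, explicitly deferred to ``a forthcoming paper,'' so there is no proof of record to compare against, and your proposal must stand or fall on its own. As it stands, it does not: it is a program whose two pivotal steps are exactly the mathematical content of the conjecture, and both are asserted rather than established. First, the equality hypothesis $\mm^+(\Omega)=h\ms\,\mm(\Omega)$ constrains only a double limit (the $\liminf$ defining the Minkowski content, followed by $R\to\infty$). For any \emph{fixed} $R$ and $t$, none of the inequalities used in Theorem \ref{th1} --- the convexity \eqref{th1:eq2}, the Jensen step \eqref{th1:eq3}, or the inclusion $Z_t\subset\Omega^\epsilon$ --- need be saturated, so ``tracing the equality backwards'' does not make $\ent{\mm}$ affine along any actual geodesic, nor does it make any $\mu_t$ uniform on $Z_t$. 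To get your function $b$ you would need a compactness and stability argument showing that the Kantorovich potentials for the transports onto $B_R(x_0)$, suitably renormalized, converge as $R\to\infty$ and that the \emph{asymptotic} near-saturation survives the limit as the pointwise identities $|\nabla b|=1$ and $\Delta b=h\ms$. You correctly flag this as ``the hard part,'' but flagging it is not closing it; this is where the conjecture lives.

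Second, the appeal to ``the $\rcd$ splitting machinery'' is not available off the shelf. Gigli's splitting theorem is a statement about ${\rm RCD}(0,N)$ spaces with $N<\infty$ containing a line, with a \emph{harmonic} Busemann function ($\Delta b=0$), and its proof uses that the gradient flow of $b$ is measure-preserving. Here $\Delta b=h\ms>0$, so the flow dilates the measure, and in fact no splitting theorem of the classical form can hold for $\rcd$ spaces: the model $\bigl(\R,|\cdot|,e^t\,\d t\bigr)\times Y$ itself contains lines yet does not split off an unweighted $\R$ factor. So the ``weighted variant'' you mention (from $|\nabla b|=1$, $\Hess b=0$, $\Delta b=h\ms$ constant, deduce an isometric splitting with density $e^{h\ms s}$ on the $\R$ factor) is a second theorem that would have to be proven, not invoked; the Bochner computation you sketch (constant $|\nabla b|^2$ and constant $\Delta b$ forcing $\Hess b=0$) is plausible but requires $b$ to lie in the correct test-function classes, which again depends on the unproven first step. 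In short: your blueprint is the natural one and is consistent with how rigidity theorems of this type (Gigli's splitting, rigidity in the Bakry--Ledoux isoperimetric inequality) are actually proven, but both load-bearing steps are open, so this is a research plan, not a proof.
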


\def\cprime{$'$}

\end{document}